\newtheorem{theorem}{\textbf{Theorem}}
\newtheorem{corollary}[theorem]{\textbf{Corollary}}
\newtheorem{lemma}[theorem]{\textbf{Lemma}}
\newtheorem{proposition}[theorem]{\textbf{Proposition}}
\newcommand{\pn}{\par\noindent}
\newcommand{\mor}[3]{$\xymatrix@1@C=15pt{#3: #1\ar[r]& #2}$}
\newcommand{\iso}[3]{$\xymatrix@1@C=15pt{#3: #1\ar[r]^-{\cong}& #2}$}
\newcommand{\Hom}[1]{{\rm Hom}#1}
\newcommand{\Ext}[1]{{\rm Ext}#1}
\newcommand{\Ker}[1]{{\rm Ker}\ #1}
\newcommand{\findim}{{\rm fin.dim.\, }}
\renewcommand{\P}{\mathcal{P}^{<\infty}_\Lambda}
\newcommand{\Mod}{{\rm Mod\,}}
\renewcommand{\mod}{{\rm mod\,}}
\newcommand{\pd}{{\rm pd}}
\newcommand{\ind}{{\rm ind\,}}
\newcommand{\add}{{\rm add\,}}
\newcommand{\Add}{{\rm Add\,}}
\newcommand{\X}{\mathcal{X}}
\begin{document}
\sloppy
\bibliographystyle{plain}

\title{Approximations of injective modules and finitistic dimension}

\author{F. Huard and D. Smith}
\address{\pn Fran\c cois Huard; Department of mathematics,
Bishop's University. Sherbrooke, Qu\'ebec, Canada,  J1M1Z7.}
\email{fhuard@ubishops.ca}
\address{\pn David Smith; Department of mathematics,
Bishop's University. Sherbrooke, Qu\'ebec, Canada,  J1M1Z7.}
\email{dsmith@ubishops.ca}

\begin{abstract} Let $\Lambda$ be an artin algebra and let $\P$ the category of finitely generated right $\Lambda$-modules of finite projective dimension. We show that $\P$ is contravariantly finite in $\mod\Lambda$ if and only if the direct sum $E$ of the indecomposable Ext-injective modules in $\P$ form a tilting module in $\mod\Lambda$.  Moreover, we show that in this case $E$ coincides with the direct sum of the minimal right $\P$-approximations of the indecomposable $\Lambda$-injective modules and that the projective dimension of $E$ equal to the finitistic dimension of $\Lambda$.
\end{abstract}

\maketitle

Given an artin algebra $\Lambda$, we let $\P$ denote the class of all finitely generated right $\Lambda$-modules of finite projective dimension.  The finitistic dimension of $\Lambda$, $\findim\Lambda$, is then defined to be the supremum of the projective dimensions of the modules in $\P$.

It is still an open question whether $\findim\Lambda$ is always finite.  However, when $\P$ is contravariantly finite, it was shown by Auslander and Reiten that $\findim\Lambda$ is finite. They showed in \cite{AR} that under this hypothesis, the finitistic dimension of $\Lambda$ is equal to the maximum of the projective dimensions of the minimal right $\P$-approximations of the simple $\Lambda$-modules.  In this note, we show that this result remains valid when simples are replaced by indecomposable injectives. Moreover, and it is crucial to our proof, the direct sum of these $\P$-approximations of indecomposable injectives turns out to be a tilting module. This is not true in general for the $\P$-approximations of the simple $\Lambda$-modules.


Conversely, by using results from \cite{AHT, AS}, we show that if the direct sum $E$ of the indecomposable Ext-injective modules in $\P$ form a tilting module, then $\P$ is contravariantly finite and $E$ coincides with the direct sum of the minimal right $\P$-approximations of the indecomposable $\Lambda$-injective modules. This complements the results of \cite{HU} that were obtained for contravariantly finite and resolving subcategories of $\mathcal{P}_{\Lambda}^{\leq n}$, that is the category of all finitely generated right $\Lambda$-modules of projective dimension at most $n$, with $n\in\mathbb{N}$. 

  Let $\Mod\Lambda$ denote the category of all right $\Lambda$-modules and $\mod\Lambda$ be the category of finitely generated right $\Lambda$-modules. Let $\X$ be a subcategory of $\mod\Lambda$.  By a \emph{right $\X$-approximation} of a module $M\in\mod\Lambda$, we mean a morphism $f_M: X_M \to M$, with $X_M\in\X$, such that any morphism $f:X\to M$, with $X\in\X$, factors through $f_M$.  We say that $\X$ is \emph{contravariantly finite} if each $M\in\mod\Lambda$ admits a right $\X$-approximation.

A module $X\in \X$ is \emph{Ext-injective} in $\add\X$ (the class of finite direct sums of direct summands of modules in $\X$)  if $\Ext^1_\Lambda(-, X)\mid_{\X}=0$, and \emph{splitting injective} in $\X$ if each injective morphism $X\to Y$ with $Y$ in $\add\X$ splits.  We denote by $I_0(\X)$ the class of all indecomposable splitting injectives in $\X$.  The following is  easy.

\begin{lemma}\label{lem:easy}
Let $\X$ be a subcategory of $\mod\Lambda$ closed under extensions and cokernels of injections and let $X\in \X$.  Then $X$ is a splitting injective in $\X$ if and only if $X$ is Ext-injective in $\add\X$.
\end{lemma}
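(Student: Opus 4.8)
The plan is to establish the two implications separately, using the standard dictionary between the vanishing of $\Ext^1_\Lambda(-,X)$ and the splitting of short exact sequences of the form $0\to X\to Y\to Z\to 0$. First I would record a preliminary observation: a subcategory closed under extensions is automatically closed under finite direct sums, since $0\to A\to A\oplus C\to C\to 0$ is exact; hence every module of $\add\X$ is a direct summand of a module of $\X$, and since $\Ext^1_\Lambda(-,X)$ is additive, $X$ is Ext-injective in $\add\X$ as soon as $\Ext^1_\Lambda(Y,X)=0$ for all $Y\in\X$, in which case this vanishing then holds for all $Y\in\add\X$ as well. This is what lets one move freely between $\X$ and $\add\X$.

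Assume $X$ is Ext-injective in $\add\X$ and let $f\colon X\to Y$ be an injection with $Y\in\add\X$. Write $Y$ as a direct summand of some $V\in\X$, say $V=Y\oplus Y'$, and set $Z=\operatorname{coker} f$. Then the composite $X\xrightarrow{f} Y\hookrightarrow V$ is an injection with cokernel $Z\oplus Y'$, so closure of $\X$ under cokernels of injections gives $Z\oplus Y'\in\X$ and therefore $Z\in\add\X$. Ext-injectivity, extended to $\add\X$ as above, yields $\Ext^1_\Lambda(Z,X)=0$, so the sequence $0\to X\xrightarrow{f} Y\to Z\to 0$ splits; in particular $f$ splits, and $X$ is splitting injective in $\X$.

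Conversely, assume $X$ is splitting injective in $\X$ and let $Y\in\X$. An arbitrary element of $\Ext^1_\Lambda(Y,X)$ is represented by a short exact sequence $0\to X\xrightarrow{g} W\to Y\to 0$, and closure of $\X$ under extensions gives $W\in\X\subseteq\add\X$. Since $g$ is an injection whose target lies in $\add\X$, splitting injectivity of $X$ forces $g$, and hence the whole sequence, to split, so the chosen class of $\Ext^1_\Lambda(Y,X)$ is zero. As the class was arbitrary, $\Ext^1_\Lambda(Y,X)=0$ for every $Y\in\X$, which by the preliminary observation means $X$ is Ext-injective in $\add\X$.

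The argument is routine throughout; the only place that needs a moment's attention is the passage between $\X$ and $\add\X$, which is precisely what the opening observation takes care of. Everything else is the elementary correspondence between elements of $\Ext^1$ and split exact sequences together with the two stated closure properties of $\X$.
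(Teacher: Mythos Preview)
The paper does not actually supply a proof of this lemma; it only remarks that ``the following is easy'' and states the result. Your argument is correct and is exactly the routine verification one would expect: the passage from $\X$ to $\add\X$ via closure under extensions (hence finite sums), and then the two-way translation between vanishing of $\Ext^1_\Lambda(-,X)$ and splitting of the relevant short exact sequences, using closure under extensions for one implication and closure under cokernels of injections for the other. There is nothing to compare against, and nothing to correct.
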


The following proposition is a particular case of \cite[Proposition 3.6]{AS}.
\begin{proposition}
Suppose that $\P$ is contravariantly finite. Let $I_1, I_2, \dots, I_n$ be a complete set of non-isomorphic indecomposable injective $\Lambda$-modules, and, for each $j=1, 2, \dots, n$, let $f_j:A_j\rightarrow I_j$ be the unique right minimal $\P$-approximations. Then
$$ I_0(\P)=\cup_{j=1}^n\ind A_j.$$
\noindent where $\ind A_j$ denotes the set of indecomposable direct summands of $A_j$.
\end{proposition}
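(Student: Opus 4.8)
The plan is to prove the set equality by two inclusions, using Lemma~\ref{lem:easy} to translate "splitting injective in $\P$" into "Ext-injective in $\add\P$" — this applies because $\P$ is closed under extensions and under cokernels of injections (the latter because if $0\to X\to Y\to Z\to 0$ is exact with $X, Y$ of finite projective dimension, then so is $Z$). So throughout I will work with Ext-injectivity.

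For the inclusion $\bigcup_j \ind A_j \subseteq I_0(\P)$, I would fix an indecomposable summand $A$ of some $A_j$ and show $\Ext^1_\Lambda(P, A) = 0$ for every $P \in \P$. The key tool is that $f_j : A_j \to I_j$ is a right $\P$-approximation and $A_j$ is of finite projective dimension, together with the fact that $I_j$ is injective. First I would analyze the exact sequence associated to the approximation: letting $K = \Ker f_j$, one gets $0 \to K \to A_j \to \Image f_j \to 0$, and I would want to identify when $f_j$ is surjective (it is, since every injective is generated by $\P$ — indeed $\Lambda$ itself, being projective, surjects onto each $I_j$ up to the approximation property, so the image of $f_j$ contains the image of any map from a projective, hence $f_j$ is onto). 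Applying $\Hom_\Lambda(P, -)$ for $P \in \P$ and using $\Ext^1_\Lambda(P, I_j) = 0$ (injectivity of $I_j$), the connecting map shows $\Ext^1_\Lambda(P, A_j)$ injects into $\Ext^2_\Lambda(P, K)$; iterating, or rather using right minimality to control $K$, one pushes the obstruction up the projective dimension of $A_j$ until it vanishes. The cleaner route: since $f_j$ is a right $\P$-approximation and the class $\P$ is resolving, a standard argument (Wakamatsu-type) shows $\Ext^1_\Lambda(\P, K) = 0$, i.e. $K$ is "$\P$-perpendicular on the left," and then the long exact sequence forces $\Ext^1_\Lambda(P, A_j) = 0$ for all $P \in \P$, so each summand of $A_j$ is Ext-injective in $\add\P$, hence splitting injective, hence in $I_0(\P)$.

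For the reverse inclusion $I_0(\P) \subseteq \bigcup_j \ind A_j$, I would take an indecomposable $X \in \P$ that is Ext-injective in $\add\P$ and produce an injective $I_j$ and an embedding of $X$ as a summand of $A_j$. Since $X$ embeds in its injective envelope $I(X) = \bigoplus_j I_j^{(m_j)}$, I would use the universal property: the inclusion $X \hookrightarrow I(X)$ factors through the direct sum of the approximations $\bigoplus f_j^{(m_j)} : \bigoplus A_j^{(m_j)} \to I(X)$ via some $g : X \to \bigoplus A_j^{(m_j)}$. The point is that $g$ is a split monomorphism: it is mono because its composite with $\bigoplus f_j^{(m_j)}$ is the injective envelope inclusion; and it splits because $X$ is Ext-injective in $\add\P$ and $\bigoplus A_j^{(m_j)} \in \add\P$, while the cokernel of $g$ again lies in $\P$ — here I need that $\P$ is closed under cokernels of monomorphisms, which was noted above. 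Then $X$, being an indecomposable summand of $\bigoplus A_j^{(m_j)}$, is by Krull–Schmidt a summand of some $A_j$.

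The main obstacle I anticipate is the surjectivity and minimality bookkeeping in the first inclusion — specifically, establishing $\Ext^1_\Lambda(\P, \Ker f_j) = 0$ cleanly. This is the Wakamatsu-lemma phenomenon: for a \emph{minimal} right approximation by a resolving (or merely extension-closed, cokernel-of-mono-closed) subcategory, the kernel is right-orthogonal to the subcategory. I would either cite this directly or reprove it in the one line it takes: given $\xi \in \Ext^1_\Lambda(P, K)$ with $P \in \P$, form the pullback $0 \to K \to E \to P \to 0$; since $P$ and $A_j$ both lie in $\P$ and $\P$ is closed under extensions, the middle term of the pushout along $K \to A_j$ lies in $\P$, and the approximation property together with minimality of $f_j$ forces the extension to split. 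Once this is in hand, both inclusions are short, and the Krull–Schmidt theorem for $\mod\Lambda$ closes the argument. \cqfd
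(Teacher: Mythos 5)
Your proposed proof is essentially correct, but note that the paper itself offers no argument for this proposition: it is simply quoted as a particular case of Proposition~3.6 of Auslander--Smal{\o} \cite{AS}, so there is no internal proof to compare against. What you have written is a sound self-contained substitute. The two inclusions work as you describe: for $\bigcup_j\ind A_j\subseteq I_0(\P)$, the surjectivity of $f_j$ follows because a surjection from a projective onto $I_j$ must factor through $f_j$, Wakamatsu's lemma (which needs only that $\P$ is closed under extensions and that $f_j$ is right minimal) gives $\Ext^1_\Lambda(P,\Ker f_j)=0$ for $P\in\P$, and then the long exact sequence for $0\to \Ker f_j\to A_j\to I_j\to 0$ together with $\Ext^1_\Lambda(P,I_j)=0$ kills $\Ext^1_\Lambda(P,A_j)$; for the reverse inclusion, factoring the injective envelope $X\hookrightarrow I(X)$ through $\bigoplus_j f_j^{(m_j)}$ yields a monomorphism $g:X\to\bigoplus_j A_j^{(m_j)}$ whose cokernel is in $\P$, so $g$ splits and Krull--Schmidt finishes. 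Two small points of hygiene: in your Wakamatsu sketch you say ``pullback'' where you mean that an extension class is represented by a short exact sequence (the relevant construction is the pushout along $\Ker f_j\hookrightarrow A_j$), and the one line you allot to the splitting step hides the genuinely needed observation that the retraction $G\to A_j$ composed with $E\to G$ lands inside $\Ker f_j$ because its composite with $f_j$ vanishes; spelling that out would make the argument airtight. Also make sure Lemma~\ref{lem:easy} is applicable by recording, as you do, that $\P$ is closed under extensions and under cokernels of monomorphisms.
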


Combining this proposition with the lemma, we get:

\begin{corollary}\label{cor:approx}
If $\P$ is contravariantly finite, then the indecomposable direct summands of the unique right minimal $\P$-approximations of the indecomposable injective $A$-modules coincide with the indecomposable Ext-injective modules in $\P$.
\end{corollary}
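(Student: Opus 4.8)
The plan is to obtain the statement by simply combining the preceding Proposition with Lemma~\ref{lem:easy}, so the only thing requiring attention is to check that $\P$ meets the hypotheses of that lemma. First I would recall that, by definition, $I_0(\P)$ is precisely the class of indecomposable splitting injectives in $\P$, and that the Proposition identifies it with $\cup_{j=1}^n\ind A_j$, the union of the sets of indecomposable direct summands of the minimal right $\P$-approximations $f_j\colon A_j\to I_j$ of the indecomposable injective $\Lambda$-modules $I_j$. Since minimal right $\P$-approximations are unique up to isomorphism when they exist, each set $\ind A_j$, and hence the union, is well defined. It therefore suffices to identify the indecomposable splitting injectives in $\P$ with the indecomposable Ext-injective modules in $\P$.

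Next I would verify that $\P$ is closed under extensions and under cokernels of injections, so that Lemma~\ref{lem:easy} applies with $\X=\P$. Both are immediate from the long exact $\Ext$-sequence: for a short exact sequence $0\to A\to B\to C\to 0$ in $\mod\Lambda$ one has $\pd B\le\max\{\pd A,\pd C\}$ and $\pd C\le\max\{\pd B,\pd A+1\}$, and finiteness of the relevant projective dimensions on the right forces finiteness on the left. Moreover $\P$ is closed under finite direct sums and direct summands, so $\add\P=\P$ and the notions ``Ext-injective in $\add\P$'' and ``$X$ with $\Ext^1_\Lambda(-,X)\mid_{\P}=0$'' coincide.

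With these remarks, Lemma~\ref{lem:easy} applied to $\X=\P$ says that a module $X\in\P$ is splitting injective in $\P$ if and only if it is Ext-injective in $\P$. Passing to indecomposables and feeding this into the Proposition gives
$$\bigcup_{j=1}^n\ind A_j \;=\; I_0(\P) \;=\; \{\text{indecomposable Ext-injective modules in }\P\},$$
which is exactly the assertion of the corollary.

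I do not anticipate a genuine obstacle: the real content is carried by the cited Proposition (a special case of \cite[Proposition 3.6]{AS}), and everything else is bookkeeping. The only place that calls for a word of care is closure of $\P$ under cokernels of injections, where the ``$+\,1$'' in the projective dimension estimate appears; but this is harmless, since only finiteness of the dimensions, not a uniform bound, is needed.
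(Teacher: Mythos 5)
Your proposal is correct and follows exactly the route the paper intends: the corollary is stated there as an immediate consequence of combining the cited Proposition with Lemma~\ref{lem:easy}, and your verification that $\P$ is closed under extensions and cokernels of injections (so that the lemma applies with $\X=\P$) is precisely the bookkeeping the paper leaves implicit.
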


For a subcategory $\X$ of $\mod\Lambda$, we let $$\X^{\bot}=\{M \in\ \Mod\Lambda \ \mid \ \Ext^{i}_\Lambda(-, M)\mid_{\X}=0, \mbox{ for all } i>0\}.$$  In particular, the shifting lemma gives $$(\P)^{\bot}=\{M \in\ \Mod\Lambda \ \mid \ \Ext^{1}_\Lambda(-, M)\mid_{\P}=0\}.$$ The following result is the dual of \cite[Theorem 5.5 (b)]{AR}; see also \cite[Theorem 2.1]{S}. Recall that a subcategory $\X$ of $\mod\Lambda$ is said to be {\em resolving} if it contains the projective $\Lambda$-modules and it is closed for extensions and kernels of surjections.

\begin{proposition}
The map $\X \longmapsto \X\cap\X^{\bot}$ gives a one-to-one correspondence between contravariantly finite resolving subcategories $\X$ of $\P$ and equivalence classes of tilting $\Lambda$-modules in $\mod \Lambda$.
\end{proposition}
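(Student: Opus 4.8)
The statement is the dual of \cite[Theorem 5.5(b)]{AR}, so the natural strategy is to transport the original theorem through the standard duality $D=\Hom_{A}(-,E(A/\rad A))$ between $\mod\Lambda$ and $\mod\Lambda^{\mathrm{op}}$, or alternatively to reprove it directly by mimicking the Auslander--Reiten argument in the contravariant setting. I would favour the dualization route: \cite[Theorem 5.5(b)]{AR} gives a bijection between \emph{covariantly} finite \emph{coresolving} subcategories of the category $\mc{I}^{<\infty}$ of modules of finite \emph{injective} dimension and equivalence classes of \emph{cotilting} modules, via $\mc{Y}\mapsto \mc{Y}\cap {}^{\bot}\mc{Y}$. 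Applying $D$ turns finite injective dimension into finite projective dimension, coresolving into resolving, covariantly finite into contravariantly finite, cotilting into tilting, ${}^{\bot}(-)$ into $(-)^{\bot}$, and equivalence classes into equivalence classes; so the asserted correspondence $\X\mapsto\X\cap\X^{\bot}$ drops out formally.

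The key steps, in order, are: (1) recall precisely the form of \cite[Theorem 5.5(b)]{AR} and check the dictionary under $D$ entry by entry --- in particular that $D(\mc{I}^{<\infty}_{\Lambda^{\mathrm{op}}})=\P$, that $D$ carries a coresolving subcategory to a resolving one, and that $D(\mc{Y}\cap{}^{\bot}\mc{Y})=D\mc{Y}\cap (D\mc{Y})^{\bot}$; (2) verify that $D$ sends cotilting modules (in the sense used in \cite{AR}, i.e. finite injective dimension, $\Ext^{i}$-selforthogonality, and the appropriate coresolution of the injective cogenerator) to tilting modules, and that $D$ respects the equivalence relation ``$\add T=\add T'$''; (3) conclude that the two maps $\X\mapsto\X\cap\X^{\bot}$ and $\mc{Y}\mapsto{}^{\bot}\mc{Y}\cap\mc{Y}$ are mutually inverse because their $D$-conjugates are, by \cite{AR}, mutually inverse. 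One should also note that for $\X$ a resolving subcategory of $\P$ one has $\X^{\bot}\cap\P={}^{\bot_{1}}\!$-data consistent with the shifting-lemma identity $(\P)^{\bot}=\{M:\Ext^{1}(-,M)|_{\P}=0\}$ already recorded in the excerpt, so that $\X\cap\X^{\bot}$ is genuinely a tilting module lying in $\P$.

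The main obstacle is not conceptual but bookkeeping: making sure every hypothesis in \cite[Theorem 5.5(b)]{AR} has an exact mirror image, so that no side condition is lost in dualization. Two points deserve care. First, \cite{AR} work inside $\mc{I}^{<\infty}$ (resp. our $\P$) rather than all of $\mod\Lambda$, and one must confirm that ``contravariantly finite as a subcategory of $\P$'' is equivalent to the condition actually needed (this is where resolvingness of $\P$ itself, or rather the behaviour of approximations relative to $\P$, enters). Second, one must be careful that the tilting modules produced lie in $\mod\Lambda$ and have the correct (finite) projective dimension, i.e. that $D$ of a cotilting module of finite injective dimension is a classical tilting module and not merely a ``tilting module of finite projective dimension'' in some generalized sense --- but since $\mc{I}^{<\infty}$ over $\Lambda^{\mathrm{op}}$ dualizes exactly to $\P$ over $\Lambda$, this is automatic. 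If one instead prefers a self-contained argument, the alternative is to run the Auslander--Reiten proof verbatim with all arrows reversed: build, for a contravariantly finite resolving $\X\subseteq\P$, the module $T=\X\cap\X^{\bot}$ by taking right $\X$-approximations of the projectives and assembling the Ext-injectives; show $T$ is tilting using that $\X$ is resolving and contravariantly finite; and conversely, given a tilting $T$, set $\X={}^{\bot}T\cap\P$ (equivalently the modules admitting a finite $\add T$-resolution) and check it is contravariantly finite and resolving with $\X\cap\X^{\bot}=\add T$. I expect the dualization route to be shorter and I would present it as such, citing \cite{AR} and \cite{S} for the original.
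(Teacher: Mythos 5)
The paper offers no written proof of this proposition: it records it as the dual of \cite[Theorem 5.5(b)]{AR} (see also \cite[Theorem 2.1]{S}), which is precisely the justification you propose. Your plan---transporting the Auslander--Reiten correspondence through the standard duality $D$ and checking the dictionary (projective/injective dimension, resolving/coresolving, contravariantly/covariantly finite, tilting/cotilting, the two perpendicular categories) entry by entry---is therefore essentially the same approach as the paper's, and it is sound.
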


In the above proposition, a \emph{tilting module} is an object $T$ in $\mod\Lambda$ such that
\begin{enumerate}
\item[(i)] The projective dimension of $T$ is finite;
\item[(ii)] $T\in T^{\bot}$;
\item[(iii)] there exists a long exact sequence $0\rightarrow \Lambda_\Lambda \rightarrow T^0 \rightarrow\cdots\rightarrow T^n\rightarrow 0$, with $T^i\in {\rm add\,}T$ for all $i=0, 1, \dots, n$,
\end{enumerate}
and two tilting modules $T'$ and $T'$ are called \emph{equivalent} if $\add T=\add T'$.

\begin{corollary} \label{cor}
If $\P$ is contravariantly finite, then the indecomposable Ext-injectives in $\P$ form a finitely generated tilting $\Lambda$-module.
\end{corollary}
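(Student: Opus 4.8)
The plan is to apply the one‑to‑one correspondence of the preceding proposition to the subcategory $\X=\P$ itself. First I would verify that $\P$ is a contravariantly finite resolving subcategory of $\P$: contravariant finiteness is exactly the hypothesis, $\P$ contains the projective modules, and from a short exact sequence $0\to A\to B\to C\to 0$ the standard estimates $\pd B\le\max\{\pd A,\pd C\}$ and $\pd A\le\max\{\pd B,\pd C-1\}$ show that $\P$ is closed under extensions and under kernels of surjections. The correspondence then yields a tilting module $T\in\mod\Lambda$ with $\add T=\P\cap(\P)^{\bot}$.

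The next step is to recognise $\P\cap(\P)^{\bot}$ as the class of Ext-injectives in $\P$. Since a finite direct sum of modules of finite projective dimension again has finite projective dimension, and so does every direct summand of such a module, we have $\add\P=\P$; hence for $X\in\P$, being Ext-injective in $\add\P$ means precisely $\Ext^1_\Lambda(-,X)\mid_{\P}=0$, which by the shifting-lemma description $(\P)^{\bot}=\{M\mid\Ext^1_\Lambda(-,M)\mid_{\P}=0\}$ recorded above is exactly the condition $X\in\P\cap(\P)^{\bot}$. Consequently the indecomposable direct summands of $T$ are exactly the indecomposable Ext-injective modules in $\P$.

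It then remains to note that $T$, being a module in $\mod\Lambda$, is finitely generated and hence has only finitely many pairwise non-isomorphic indecomposable summands; alternatively, Corollary \ref{cor:approx} identifies the indecomposable Ext-injectives in $\P$ with the indecomposable summands of the finitely many minimal right $\P$-approximations $A_1,\dots,A_n$ of the indecomposable injective $\Lambda$-modules. Either way the direct sum $E$ of the indecomposable Ext-injectives in $\P$ is a well-defined finitely generated module with $\add E=\add T$. Since conditions (i)--(iii) in the definition of a tilting module depend only on the additive closure, $E$ is itself a finitely generated tilting module, as claimed.

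The argument is short, and the only point requiring a little care is the bookkeeping in the second paragraph, namely matching the notion of being Ext-injective in $\add\P$ with membership in $(\P)^{\bot}$. Once $\add\P=\P$ has been observed this is immediate, so I do not anticipate a genuine obstacle here.
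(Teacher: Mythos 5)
Your proposal is correct and follows essentially the same route as the paper: apply the correspondence $\X\mapsto\X\cap\X^{\bot}$ to $\X=\P$ and identify $\P\cap(\P)^{\bot}$ with the Ext-injectives in $\P$ via the shifting-lemma description of $(\P)^{\bot}$. The paper's own proof is just that identification chain, with the resolving/contravariantly-finite hypotheses and the finite-generation bookkeeping left implicit, so your extra verifications are welcome but not a departure.
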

\begin{proof}
Indeed, we have
\[
\begin{array}{rcl}
(\P)\cap({\P})^{\bot} & = & \{M\in \P \ | \ \Ext^i_\Lambda(-, M)\mid_{\P}=0, \mbox{ for all } i>0\}\\ [2mm]
&=&  \{M\in \P \ | \ \Ext^1_\Lambda(-, M)\mid_{\P}=0\}\\ [2mm]
&=&  \{ \mbox{Ext-injectives in }\P\}.
\end{array}
\]
\end{proof}

We also need the following results from \cite{AHT}.   In what follows, $\Add \X$ denotes the class of direct summands of arbitrary direct sums of objects of $\X$.


\begin{lemma}\label{lem:ACT}\emph{\cite[Lemma 1.7]{AHT}} Let $T$ be a tilting module.  Then for each $M$ in $T^{\bot}$ there exists a short exact sequence
$\xymatrix@1{0\ar[r] & K_0 \ar[r] & T_0 \ar[r] & M \ar[r] & 0}$, where
$T_0$ is in $\Add\,T$ and $K_0$ is in $T^{\bot}$.
\end{lemma}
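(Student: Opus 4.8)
The plan is to build the sequence by hand from the canonical evaluation map and then read off the $\bot$-conditions from a single long exact sequence. Fix $M$ in $T^{\bot}$ and consider the map $\varepsilon_M\colon T^{(\Hom_\Lambda(T,M))}\to M$ whose restriction to the copy of $T$ indexed by $\phi\in\Hom_\Lambda(T,M)$ is $\phi$. The first, and principal, step is to show that $\varepsilon_M$ is an epimorphism, equivalently that $T^{\bot}\subseteq\Gen T$. For this I would invoke condition (iii): take the coresolution $0\to\Lambda\to T^0\to\cdots\to T^n\to 0$ with each $T^i$ in $\add T$, break it up into short exact sequences $0\to\Lambda\to T^0\to C^1\to 0$ and $0\to C^i\to T^i\to C^{i+1}\to 0$ for $1\le i\le n-1$ (with $C^n:=T^n$), and apply $\Hom_\Lambda(-,M)$. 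Since $\Ext^i_\Lambda(T,M)=0$ for $i>0$ and each $T^i$ lies in $\add T$, dimension shifting along these sequences gives $\Ext^1_\Lambda(C^1,M)\cong\Ext^2_\Lambda(C^2,M)\cong\cdots\cong\Ext^n_\Lambda(T^n,M)=0$; substituting $\Ext^1_\Lambda(C^1,M)=0$ back into the first sequence shows that $\Hom_\Lambda(T^0,M)\to\Hom_\Lambda(\Lambda,M)\cong M$ is surjective, and a short element chase (using that $T^0$ is a direct summand of a finite power of $T$) promotes this to surjectivity of $\varepsilon_M$.

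Granting this, set $T_0:=T^{(\Hom_\Lambda(T,M))}$, which lies in $\Add T$, and $K_0:=\Ker\varepsilon_M$, so that we obtain an exact sequence $0\to K_0\to T_0\to M\to 0$. It remains to check that $K_0\in T^{\bot}$. Since $T$ is finitely generated, $\Ext^i_\Lambda(T,-)$ commutes with arbitrary direct sums, so condition (ii) forces $\Ext^i_\Lambda(T,T_0)=0$ for all $i>0$, that is, $T_0\in T^{\bot}$. Applying $\Hom_\Lambda(T,-)$ to the short exact sequence, the portion $0=\Ext^i_\Lambda(T,M)\to\Ext^{i+1}_\Lambda(T,K_0)\to\Ext^{i+1}_\Lambda(T,T_0)=0$ yields $\Ext^j_\Lambda(T,K_0)=0$ for $j\ge 2$, while the tail $\Hom_\Lambda(T,T_0)\to\Hom_\Lambda(T,M)\to\Ext^1_\Lambda(T,K_0)\to\Ext^1_\Lambda(T,T_0)=0$ yields $\Ext^1_\Lambda(T,K_0)=0$ as soon as the first arrow is surjective; but that is automatic from the definition of $T_0$, since for each $\phi\in\Hom_\Lambda(T,M)$ the composite of $\varepsilon_M$ with the inclusion of the $\phi$-th summand is $\phi$ itself. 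Hence $K_0\in T^{\bot}$, as required.

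I expect the inclusion $T^{\bot}\subseteq\Gen T$ (surjectivity of $\varepsilon_M$) to be the main obstacle: this is the only place where the tilting hypothesis is genuinely used, through the coresolution of $\Lambda$ and the dimension-shifting computation. Everything afterwards is a formal diagram chase, the one mild subtlety being the passage from surjectivity of $\Hom_\Lambda(T^0,M)\to M$ to surjectivity of the global evaluation map $\varepsilon_M$, which is routine.
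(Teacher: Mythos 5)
Your proof is correct. Note that the paper does not prove this lemma at all---it is quoted verbatim from \cite[Lemma 1.7]{AHT}---so there is no in-paper argument to compare against; your write-up is essentially the standard proof from that reference. You correctly identify the real content as the inclusion $T^{\bot}\subseteq\Gen T$, obtained by dimension-shifting along the coresolution of $\Lambda$ from condition (iii), and the remaining verifications (that $T_0\in T^{\bot}$ because $\Ext^i_\Lambda(T,-)$ commutes with direct sums for finitely generated $T$ over an artin algebra, and that $\Hom_\Lambda(T,T_0)\to\Hom_\Lambda(T,M)$ is surjective by construction of the evaluation map, killing $\Ext^1_\Lambda(T,K_0)$) are all carried out accurately.
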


\begin{theorem}\label{lem:AHT} \emph{\cite[Theorems 2.6 and 4.2]{AHT}} Let $\Lambda$ be an artin algebra. The following conditions are equivalent:
\begin{enumerate}
\item[(a)] $\P$ is contravariantly finite in $\mod\Lambda$;
\item[(b)] there is a finitely presented tilting module $T$ such that $T^\bot=(\P)^{\bot}$.
\end{enumerate}
Moreover, in this case, the finitistic dimension of $\Lambda$ is equal to the projective dimension of $T$.
\end{theorem}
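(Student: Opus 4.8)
The plan is to prove the equivalence by treating the two implications separately, and then to read the finitistic dimension statement off the argument for $(b)\Rightarrow(a)$.

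For $(a)\Rightarrow(b)$ I would first observe that $\P$ is itself a resolving subcategory of $\mod\Lambda$: it contains the projectives, and the standard inequalities for projective dimension along a short exact sequence (the projective dimension of an extension is at most the maximum of those of the two ends, and that of the kernel of an epimorphism $B\twoheadrightarrow C$ is at most $\max(\pd B,\pd C-1)$) show that it is closed under extensions and kernels of epimorphisms. By Corollary~\ref{cor} the direct sum $E$ of the indecomposable Ext-injectives in $\P$ is then a finitely generated --- hence finitely presented --- tilting module, and the proof of that corollary identifies $\add E$ with $\P\cap(\P)^{\bot}$. It therefore remains to verify $E^{\bot}=(\P)^{\bot}$. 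The inclusion $(\P)^{\bot}\subseteq E^{\bot}$ is immediate from $\add E\subseteq\P$. For the reverse inclusion I would invoke the fact --- part of the Auslander--Reiten machinery underlying the correspondence quoted above, see \cite{AR} --- that in a contravariantly finite resolving subcategory every object admits a finite coresolution by the Ext-injectives; thus each $P\in\P$ fits into an exact sequence $0\to P\to E^{0}\to\cdots\to E^{k}\to 0$ with $E^{j}\in\add E$, and a dimension shift along it turns $\Ext^{i}_\Lambda(E^{j},M)=0$ into $\Ext^{i}_\Lambda(P,M)=0$, so that $M\in E^{\bot}$ forces $M\in(\P)^{\bot}$.

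For $(b)\Rightarrow(a)$, let $T$ be a finitely presented tilting module with $T^{\bot}=(\P)^{\bot}$ and put $n=\pd T$. The central observation is that ${}^{\bot}(T^{\bot})\cap\mod\Lambda=\P$. For the inclusion $\subseteq$, take $X\in{}^{\bot}(T^{\bot})\cap\mod\Lambda$ and an arbitrary module $N$; the $n$-th injective cosyzygy $\Omega^{-n}N$ lies in $T^{\bot}$, because $\Ext^{i}_\Lambda(T,\Omega^{-n}N)\cong\Ext^{i+n}_\Lambda(T,N)=0$ for all $i\ge 1$ since $\pd T=n$, and hence $\Ext^{n+1}_\Lambda(X,N)\cong\Ext^{1}_\Lambda(X,\Omega^{-n}N)=0$, which gives $\pd X\le n$ and in particular $X\in\P$. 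Conversely, any $P\in\P$ lies in ${}^{\bot}(T^{\bot})$ because, by definition of $(\P)^{\bot}$, the functor $\Ext^{>0}_\Lambda(P,-)$ vanishes on $(\P)^{\bot}=T^{\bot}$. This identification already yields the finitistic dimension statement: every module in $\P$ has projective dimension at most $n=\pd T$, so $\findim\Lambda\le\pd T$, while the reverse inequality holds because $T\in\P$; hence $\findim\Lambda=\pd T$.

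There remains the contravariant finiteness of $\P$, where the finite presentation of $T$ is essential. Using Lemma~\ref{lem:ACT} and the completeness of the tilting cotorsion pair $({}^{\bot}(T^{\bot}),T^{\bot})$, one obtains for each $M\in\mod\Lambda$ a short exact sequence $0\to Y\to P_M\to M\to 0$ with $Y\in T^{\bot}$ and $P_M$ finitely generated in ${}^{\bot}(T^{\bot})$, so that $P_M\in\P$ by the previous paragraph. Since $Y\in T^{\bot}=(\P)^{\bot}$, the shifting lemma gives $\Ext^{1}_\Lambda(P',Y)=0$ for every $P'\in\P$, so the exact sequence $\Hom_\Lambda(P',P_M)\to\Hom_\Lambda(P',M)\to\Ext^{1}_\Lambda(P',Y)=0$ shows that every morphism $P'\to M$ factors through $P_M\to M$; thus $P_M\to M$ is a right $\P$-approximation and $\P$ is contravariantly finite. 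The step I expect to be the main obstacle is precisely this last one: extracting a finitely generated $P_M$, rather than merely some module in ${}^{\bot}(T^{\bot})$, from the tilting cotorsion pair. This is the genuinely non-formal ingredient, where the artinian hypothesis and the finite presentation of $T$ are used essentially, and it is the substance of \cite{AHT}; a secondary point requiring care is the precise statement, with its length bound, of the finite-coresolution fact quoted from \cite{AR} in the first implication.
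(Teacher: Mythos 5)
The paper does not actually prove this theorem: it is imported verbatim from \cite[Theorems 2.6 and 4.2]{AHT} and used as a black box, so there is no internal argument to compare yours against. On its own terms, your outline gets the formal reductions right: the identification ${}^{\bot}(T^{\bot})\cap\mod\Lambda=\P$ is correct and cleanly yields $\findim\Lambda=\pd\,T$, and the observation that a short exact sequence $0\to Y\to P_M\to M\to 0$ with $Y\in(\P)^{\bot}$ and $P_M\in\P$ makes $P_M\to M$ a right $\P$-approximation is also correct.

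However, the proposal is not a complete proof, and the gap sits exactly where you place it. Completeness of the tilting cotorsion pair only yields a special precover $P_M$ lying in ${}^{\bot}(T^{\bot})\subseteq\Mod\Lambda$, which has no reason to be finitely generated; descending to $\mod\Lambda$ is where the artin hypothesis and the finite presentation of $T$ do real work, and you supply no argument for it, deferring instead to \cite{AHT}. Since that step \emph{is} the theorem's hard content, the proposal is circular at its decisive point rather than an independent proof. A secondary soft spot is in $(a)\Rightarrow(b)$: you assert that in a contravariantly finite resolving subcategory every object admits a \emph{finite} coresolution by Ext-injectives, but in general only the cogenerating sequences $0\to X\to W\to X'\to 0$ with $W\in\X\cap\X^{\bot}$ and $X'\in\X$ come for free. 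For $\X=\P$ you can sidestep finiteness entirely: iterate these sequences and dimension-shift, using that $\findim\Lambda<\infty$ (an \cite{AR} consequence of contravariant finiteness) bounds the projective dimensions of the cosyzygies; alternatively, argue as in the paper's own Theorem \ref{thm:HS}, which obtains $E^{\bot}=(\P)^{\bot}$ from Lemma \ref{lem:ACT} by resolving each $M\in E^{\bot}$ on the left by $\Add E$.
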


%
%
%
%

We can now show our main result.

\begin{theorem}  \label{thm:HS} Let $\Lambda$ be an artin algebra.  The following conditions are equivalent:
\begin{enumerate}
\item[(a)] $\P$ is contravariantly finite in $\mod\Lambda$;
\item[(b)] the direct sum $E$ of the  indecomposable Ext-injective modules in $\P$ form a tilting $\Lambda$-module.
\end{enumerate}
Moreover, in this case, $E$ is the direct sum of the isoclasses of minimal right $\P$-approximations of the indecomposable $\Lambda$-injectives modules, and the finitistic dimension of $\Lambda$ is equal to the projective dimension of $E$.
\end{theorem}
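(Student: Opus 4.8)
The plan is to prove the equivalence $(a)\Leftrightarrow(b)$ by leveraging the machinery already assembled, and then to identify $E$ explicitly.

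For the implication $(a)\Rightarrow(b)$, suppose $\P$ is contravariantly finite. Then Corollary~\ref{cor} tells us immediately that the direct sum $E$ of the indecomposable Ext-injective modules in $\P$ is a tilting $\Lambda$-module. Moreover, by Corollary~\ref{cor:approx}, the indecomposable summands of $E$ are precisely the indecomposable summands of the minimal right $\P$-approximations $f_j\colon A_j\to I_j$ of the indecomposable injectives; since the approximations are minimal (hence have no superfluous summands) and since distinct indecomposable Ext-injectives are non-isomorphic, taking the direct sum of one copy of each isoclass we conclude $\add E = \add(\bigoplus_j A_j)$, i.e.\ $E$ is (up to equivalence, and in fact exactly, once we pass to a basic version) the direct sum of the isoclasses of the minimal right $\P$-approximations of the indecomposable injectives. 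Finally, since $E$ is a tilting module with $\add E\subseteq\P$, we have $E\in\P$ so $\pd E\leq\findim\Lambda$; conversely Theorem~\ref{lem:AHT} asserts $\findim\Lambda = \pd T$ for the tilting module $T$ with $T^{\bot}=(\P)^{\bot}$, and it remains to check $\add E=\add T$ — this follows because $E^{\bot}=(\P)^{\bot}$ (as $E$ is the sum of the Ext-injectives in $\P$, and a module lies in $(\P)^{\bot}$ iff $\Ext^1_\Lambda(-,M)|_\P=0$, which by Lemma~\ref{lem:ACT}-type arguments is detected on $\add E$) together with the bijectivity in the second Proposition applied to $\X=\P$ itself.

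For the converse $(b)\Rightarrow(a)$, assume $E$ is a tilting module. The key point is to show $E^{\bot}=(\P)^{\bot}$, after which Theorem~\ref{lem:AHT} gives contravariant finiteness of $\P$ directly, and the ``moreover'' clause of that theorem gives $\findim\Lambda=\pd E$. The inclusion $(\P)^{\bot}\subseteq E^{\bot}$ is clear since $\add E\subseteq\P$ (each Ext-injective in $\P$ has finite projective dimension) and the shifting lemma promotes $\Ext^1$-vanishing to $\Ext^i$-vanishing. For the reverse inclusion $E^{\bot}\subseteq(\P)^{\bot}$: take $M\in E^{\bot}$ and an arbitrary $P\in\P$; we must show $\Ext^1_\Lambda(P,M)=0$. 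Here I would use Lemma~\ref{lem:ACT}: since $\Lambda_\Lambda\in E^{\bot}$ is not quite what we want, instead apply the lemma repeatedly starting from $M$ — but the cleaner route is to build, for each $P\in\P$, a finite resolution of $P$ by modules in $\add E$ using that the Ext-injectives in $\P$ generate $\P$ in the appropriate sense; dually, one shows every $P\in\P$ admits a finite coresolution $0\to P\to E^0\to\cdots\to E^m\to 0$ with $E^i\in\add E$, and then $\Ext^1_\Lambda(P,M)$ is computed via this coresolution and vanishes because $M\in E^{\bot}$.

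The main obstacle is precisely this last point: producing, from the mere hypothesis that $E$ is a tilting module built from the Ext-injectives of $\P$, a finite $\add E$-coresolution of every module of finite projective dimension — equivalently, establishing the nontrivial inclusion $E^{\bot}\subseteq(\P)^{\bot}$. The available tools are Lemma~\ref{lem:ACT} (which gives, for $M\in E^{\bot}$, a short exact sequence $0\to K_0\to E_0\to M\to 0$ with $E_0\in\Add E$ and $K_0\in E^{\bot}$) and the fact that, being a tilting module, $E$ satisfies condition (iii) of the definition, so $\Lambda_\Lambda$ itself has a finite $\add E$-coresolution; combining these, every projective has such a coresolution, hence so does every module of finite projective dimension by a horseshoe/dimension-shift induction, and then the Ext-vanishing $\Ext^1_\Lambda(P,M)=0$ for all $P\in\P$ follows. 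Once $E^{\bot}=(\P)^{\bot}$ is in hand, Theorem~\ref{lem:AHT} closes both the equivalence and the finitistic-dimension statement, and the identification of $E$ with the sum of the $\P$-approximations of the injectives follows as in the $(a)\Rightarrow(b)$ direction via Corollary~\ref{cor:approx}.
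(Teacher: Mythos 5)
Your direction (a)$\Rightarrow$(b) follows the paper: Corollary \ref{cor} gives that $E$ is tilting, and Corollary \ref{cor:approx} identifies $E$ with the sum of the minimal right $\P$-approximations of the indecomposable injectives. Your route to $\findim\Lambda=\pd E$ is more convoluted than necessary (the paper simply observes that the proof of (b)$\Rightarrow$(a) establishes $E^\bot=(\P)^{\bot}$, so $E$ itself plays the role of $T$ in Theorem \ref{lem:AHT}), but that part is repairable. The genuine problem is in (b)$\Rightarrow$(a).

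There you propose to prove $E^\bot\subseteq(\P)^{\bot}$ by equipping every $N\in\P$ with a finite $\add E$-coresolution, starting from the coresolution $0\to\Lambda_\Lambda\to E^0\to\cdots\to E^m\to 0$ of condition (iii) and extending to all of $\P$ by a ``horseshoe/dimension-shift induction'' along a projective resolution. This step fails, and the failure is structural: your argument uses only that $E$ is a tilting module, never that its indecomposable summands are the Ext-injectives of $\P$. Since $\Lambda_\Lambda$ is itself a tilting module, the same argument would ``prove'' $\Lambda^\bot\subseteq(\P)^{\bot}$, i.e.\ that every module lies in $(\P)^{\bot}$, which is false whenever $\findim\Lambda>0$. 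Concretely, the inductive step breaks because the class of modules admitting a finite $\add E$-coresolution is closed under extensions and kernels of epimorphisms but not under cokernels of monomorphisms, and passing from $\pd N=n-1$ to $\pd N=n$ requires exactly such a cokernel: for $\Lambda$ hereditary, every module has finite projective dimension, yet a non-projective module admits no finite $\add\Lambda$-coresolution at all, since submodules of projectives are projective. The paper instead resolves in the opposite direction: given $M\in E^\bot$, it iterates Lemma \ref{lem:ACT} to obtain an exact sequence $0\to K_{n-1}\to E_{n-1}\to\cdots\to E_0\to M\to 0$ with each $E_i\in\Add E$, and then uses the one fact your argument omits --- that each indecomposable summand of $E$ is Ext-injective in $\P$, hence lies in $(\P)^{\bot}$ by the shifting lemma, so that $\Add E\subseteq(\P)^{\bot}$ by \cite[Lemma 1.5]{AHT}. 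Applying $\Hom_\Lambda(N,-)$ to the resulting short exact sequences then yields $\Ext^j_\Lambda(N,M)\cong\Ext^{j+n}_\Lambda(N,K_{n-1})=0$ because $\pd N=n$. Some use of $E\in(\P)^{\bot}$ of this kind is indispensable; without it the implication (b)$\Rightarrow$(a) cannot be closed.
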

\begin{proof}
First, (a) implies (b) by Corollary \ref{cor}.  Thus, assume that the direct sum $E$ of the Ext-injective modules in $\P$ forms a tilting $\Lambda$-module. We will show that $E^\bot=(\P)^{\bot}$.

Since $E\in \P$, we have $(\P)^{\bot}\subseteq E^{\bot}$. Conversely, assume that $M\in E^{\bot}$.  We need to show that $\Ext^{i}_\Lambda(N, M)=0$ for all $N\in\P$ and all $i>0$.  So let $N\in \P$ be of projective dimension $n$.

Since $E$ is a tilting module and $M\in E^{\bot}$, it follows from Lemma \ref{lem:ACT} that there exists a short exact sequence
$$\xymatrix@1{0\ar[r] & K_0 \ar[r] & E_0 \ar[r] & M \ar[r] & 0},$$
where $E_0\in \Add E$ and $K_0\in E^{\bot}$. Inductively, we get an exact sequence
$$\xymatrix@1{0\ar[r] & K_{n-1} \ar[r] & E_{n-1} \ar[r]^-{f_{n-1}} & \cdots \ar[r] & E_1 \ar[r]^-{f_1} & E_0 \ar[r]^-{f_0} & M \ar[r] & 0},$$
where $E_i\in\Add E$ and $K_i = \Ker f_i$ lies in $E^{\bot}$ for all $i>0$.

By Corollary \ref{cor:approx} and \cite[Lemma 1.5]{AHT}, each $E_i$ is in $(\P)^{\bot}$.  
Therefore, applying the functor $\Hom_\Lambda(N, -)$ to each sequence
$$\xymatrix@1{0\ar[r] & K_i \ar[r] & E_i \ar[r]^{f_i} & K_{i-1} \ar[r] & 0},$$
(where $K_{-1}=M$) yields, for each $j>0$, an isomorphism
$$\Ext^j_\Lambda(N,K_{i-1}) \cong \Ext^{j+1}_\Lambda(N, K_i).$$
Therefore,
$$\Ext_\Lambda^j(N,M)\cong \Ext_\Lambda^{j+1}(N, K_0) \cong \cdots \cong \Ext_\Lambda^{j+n}(N, K_{n-1})=0$$
for all $j>0$ because $\pd\, N=n$.  Hence $M\in (\P)^{\bot}$.  Therefore $E^\bot=(\P)^{\bot}$ as claimed, and $\P$ is contravariantly finite by Theorem \ref{lem:AHT}. Thus (b) implies (a).

Finally, $E$ is the direct sum of the isoclasses of minimal right $\P$-approximations of the indecomposable $\Lambda$-injectives modules by Corollary \ref{cor:approx} and $\findim \Lambda = \pd\, E$ by Theorem \ref{lem:AHT}.
\end{proof}

\end{document}